\documentclass[12pt,oneside,english]{amsart}
\textwidth=13.5cm \textheight=24cm \hoffset=-1cm
\usepackage[latin1]{inputenc}
\usepackage{amssymb}

\theoremstyle{plain}
\newtheorem{theorem}{Theorem}

\newtheorem{lemma}[theorem]{Lemma}

\theoremstyle{definition}

\newtheorem{example}[theorem]{Example}

\theoremstyle{remark}

\usepackage{babel}

\makeatother
\begin{document}
\baselineskip=17pt

\title[Tangent power sums]
{Tangent power sums and their applications}

\author{Vladimir Shevelev}
\address{Department of Mathematics \\Ben-Gurion University of the
 Negev\\Beer-Sheva 84105, Israel. e-mail: shevelev@bgu.ac.il}
 \author{Peter J. C. Moses}
 \address{United Kingdom. e-mail: mows@mopar.freeserve.co.uk}

\subjclass{11A63.}

\begin{abstract}
For integer $m, p,$ we study tangent power sum
$\sum^m_{k=1}\tan^{2p}\frac{\pi k}{2m+1}.$ We give recurrent, asymptotical and explicit formulas
 for these polynomials and indicate their connections with Newman's digit sums in
  base $2m.$
\end{abstract}

\maketitle
\section{Introduction}
Everywhere below we suppose that $n\geq1$ is an odd number and  $p$ is a positive
 integer. In the present paper we study tangent power sum
 of the form
\begin{equation}\label{1}
 \sigma(n, p)=\sum^{\frac{n-1}{2}}_{k=1}\tan^{2p}\frac{\pi k}{n}.
\end{equation}
In 2002, Chen \cite{1} found formulas for $\sigma(n, p)$ in case $p\leq5$ as polynomials in $n.$
 In 2007-2008, Shevelev \cite{12} and Hassan \cite{4} independently proved the following
  statements:
 \begin{theorem}\label{t1}
 For every  $p,$  $\sigma(n,p)$ is integer and multiple of $n.$
 \end{theorem}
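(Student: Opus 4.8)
The plan is to exhibit the numbers $\tan^{2}\frac{\pi k}{n}$, $1\le k\le m$ (writing $n=2m+1$), as the roots of an explicit polynomial over $\mathbb Z$ whose leading coefficient is $\pm1$, and then to read both assertions off from the logarithmic derivative of the associated reciprocal polynomial. Expanding $e^{in\theta}=(\cos\theta+i\sin\theta)^{n}$ by the binomial theorem, separating real and imaginary parts, and dividing by $\cos^{n}\theta$ gives
\[
\tan(n\theta)=\frac{\tan\theta\;P(\tan^{2}\theta)}{Q(\tan^{2}\theta)},\qquad
P(x)=\sum_{j=0}^{m}(-1)^{j}\binom{n}{2j+1}x^{j},\quad
Q(x)=\sum_{j=0}^{m}(-1)^{j}\binom{n}{2j}x^{j}.
\]
For $\theta=\frac{\pi k}{n}$ with $1\le k\le m$ we have $\tan(n\theta)=0$ while $\tan\theta\neq0$ and $Q(\tan^{2}\theta)=\cos(\pi k)/\cos^{n}\theta\neq0$, so each $x_{k}:=\tan^{2}\frac{\pi k}{n}$ is a root of $P$; since $0<\frac{\pi k}{n}<\frac{\pi}{2}$ and $\tan$ is strictly increasing there, $x_{1},\dots,x_{m}$ are distinct, and as $\deg P=m$ with leading coefficient $(-1)^{m}\binom{n}{n}=(-1)^{m}$ we obtain $P(x)=(-1)^{m}\prod_{k=1}^{m}(x-x_{k})$.

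Next put $R(t):=\prod_{k=1}^{m}(1-x_{k}t)=(-1)^{m}t^{m}P(1/t)$; reindexing the sum for $P$ turns this into the closed form $R(t)=\sum_{l=0}^{m}(-1)^{l}\binom{n}{2l}t^{l}\in\mathbb Z[t]$, with $R(0)=1$. Taking the logarithmic derivative of the product representation,
\[
-\frac{R'(t)}{R(t)}=\sum_{p\ge1}\Bigl(\sum_{k=1}^{m}x_{k}^{\,p}\Bigr)t^{p-1}=\sum_{p\ge1}\sigma(n,p)\,t^{p-1}.
\]
Since $R\in\mathbb Z[t]$ and $R(0)=1$, $R$ is a unit in $\mathbb Z[[t]]$, hence $-R'/R\in\mathbb Z[[t]]$ and every $\sigma(n,p)$ is an integer. (One could instead run Newton's identities on $x_{1},\dots,x_{m}$, whose elementary symmetric functions are the integers $\binom{n}{2l}$.)

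Finally, $n\mid\sigma(n,p)$ follows once every coefficient of $R'(t)$ is divisible by $n$, for then $-R'/R=(-R')\,R^{-1}\in n\mathbb Z[[t]]$. The coefficient of $t^{l-1}$ in $R'(t)$ is $(-1)^{l}\,l\binom{n}{2l}$, and the identity $l\binom{n}{2l}=\frac{n}{2}\binom{n-1}{2l-1}$ reduces everything to the claim that $\binom{n-1}{2l-1}$ is even for $1\le l\le m$. This is immediate modulo $2$: $n-1$ is even, so in $\mathbb F_{2}[x]$ one has $(1+x)^{n-1}=\bigl((1+x)^{2}\bigr)^{(n-1)/2}=(1+x^{2})^{(n-1)/2}$, a polynomial in $x^{2}$, so all odd-index coefficients of $(1+x)^{n-1}$ vanish mod $2$. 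The proof has no single delicate step; its content is the two structural moves, namely passing to the reciprocal polynomial $R$ and its logarithmic derivative, and then noticing that divisibility of $\sigma(n,p)$ by $n$ is controlled entirely by the parity of the binomial coefficients $\binom{n-1}{\mathrm{odd}}$.
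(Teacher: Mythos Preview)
Your proof is correct, and it follows a genuinely different route from the paper's Section~2 argument. The paper writes $\tan^{2}\frac{\pi k}{n}$ in terms of $n$-th roots of unity, expands $2\sigma(n,p)$ as a double sum, and interprets the inner product as a sum over base-$(n-1)$ digit strings; this yields the identity $2\sigma(n,p)=n\sum_{0\le r<(n-1)^{2p},\,n\mid r}(-1)^{s(r)}$ (formula~(10)), from which integrality and divisibility by~$n$ follow after a parity count of the summands. Your argument instead stays on the algebraic side: you identify the $x_{k}=\tan^{2}\frac{\pi k}{n}$ as the roots of a monic integer polynomial (the paper derives the same polynomial in Section~3, equation~(11), but uses it only for Theorem~2), pass to the reciprocal $R(t)=\sum_{l}(-1)^{l}\binom{n}{2l}t^{l}$, and read off the power sums from $-R'/R$. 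Integrality then falls out of $R\in\mathbb Z[[t]]^{\times}$, while your divisibility step is the observation that every coefficient $l\binom{n}{2l}=\tfrac{n}{2}\binom{n-1}{2l-1}$ of $R'$ is a multiple of~$n$ because $\binom{n-1}{\text{odd}}$ is even. This last point is not in the paper and is an elegant shortcut.

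What each buys: the paper's digit-sum identity~(10) is not merely a proof device but the bridge to the Newman-phenomenon applications of Section~5 (it \emph{is} equation~(29)), so that approach earns its keep later. Your approach is shorter, entirely self-contained, and makes both conclusions structurally transparent; it also dovetails with the Newton-identity remark the paper makes in passing after~(13) but does not push through to the divisibility statement.
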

 \begin{theorem}\label{t2}
 For a fixed $p,$ $\sigma(n,p)$ is a polynomial in $n$ of degree $2p$ with the leading term
 \begin{equation}\label{2}
 \frac {2^{2p-1}(2^{2p}-1)}{(2p)!}|B_{2p}|n^{2p},
 \end{equation}
where $B_{2p}$ is Bernoulli number.
 \end{theorem}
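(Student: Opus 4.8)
The plan is to realize $\sigma(n,p)$ as a power sum of the roots of an explicit polynomial and then read everything off from Newton's identities. Taking imaginary parts in $(\cos\theta+i\sin\theta)^n=\cos n\theta+i\sin n\theta$ and dividing by $\cos^{n-1}\theta\,\sin\theta$, one obtains for odd $n$
\[
\frac{\sin n\theta}{\cos^{n-1}\theta\,\sin\theta}=\sum_{j=0}^{(n-1)/2}(-1)^j\binom{n}{2j+1}\tan^{2j}\theta=:P_n(\tan^2\theta).
\]
Hence $P_n$ is a polynomial of degree $m:=\tfrac{n-1}{2}$ whose roots are precisely the $m$ distinct numbers $x_k:=\tan^2\frac{\pi k}{n}$, $k=1,\dots,m$ (for $1\le k\le n-1$ the value $\theta=\pi k/n$ annihilates $\sin n\theta$ but not $\cos\theta\,\sin\theta$, and $\tan^2$ is injective on $(0,\pi/2)$, where all these angles lie). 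Comparing $P_n(x)=\sum_j(-1)^j\binom{n}{2j+1}x^j$ with $(-1)^m\prod_{k=1}^m(x-x_k)$ gives the clean formula
\[
e_i(x_1,\dots,x_m)=\binom{n}{2i}
\]
for the elementary symmetric functions of the $x_k$.

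Now $\sigma(n,p)$ is the power sum $s_p=\sum_{k=1}^m x_k^p$. By Newton's identities $s_p$ is a universal polynomial with rational coefficients in $e_1,\dots,e_p$, each of whose monomials $\prod_i e_i^{a_i}$ satisfies $\sum_i i\,a_i=p$. Substituting $e_i=\binom{n}{2i}$, a polynomial in $n$ of degree $2i$, therefore exhibits $\sigma(n,p)$ as a polynomial in $n$ of degree at most $\sum_i 2i\,a_i=2p$. Since the identity $e_i=\binom{n}{2i}$ (with $\binom{n}{2i}=0$ for $2i>n$, matching $e_i=0$ for $i>m$) and Newton's relations hold for every odd $n\ge1$, this polynomial-in-$n$ formula represents $\sigma(n,p)$ for all odd $n$; note that Theorem~\ref{t1} is not needed for this.

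It remains to compute $\gamma_p:=[n^{2p}]\,\sigma(n,p)$ and to verify $\gamma_p\neq0$. Extracting the coefficient of $n^{2k}$ from Newton's recurrence $s_k=\sum_{i=1}^{k-1}(-1)^{i-1}e_i\,s_{k-i}+(-1)^{k-1}k\,e_k$, using $[n^{2i}]\binom{n}{2i}=1/(2i)!$ and the degree bound just proved, gives
\[
\gamma_k=\sum_{i=1}^{k-1}(-1)^{i-1}\frac{\gamma_{k-i}}{(2i)!}+(-1)^{k-1}\frac{k}{(2k)!},\qquad k\ge 1.
\]
Setting $T(z):=\sum_{k\ge1}\gamma_k z^{2k}$, the convolution on the right becomes $(1-\cos z)\,T(z)$, and the remaining sum is $\tfrac{z}{2}\sin z$ (it equals $\tfrac{z}{2}\frac{d}{dz}(1-\cos z)$); hence $T(z)=(1-\cos z)T(z)+\tfrac{z}{2}\sin z$, i.e.\ $T(z)=\tfrac{z}{2}\tan z$. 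Comparing with $\tan z=\sum_{k\ge1}\frac{2^{2k}(2^{2k}-1)|B_{2k}|}{(2k)!}z^{2k-1}$ yields $\gamma_p=\frac{2^{2p-1}(2^{2p}-1)|B_{2p}|}{(2p)!}>0$, so the degree is exactly $2p$ and the leading term is \eqref{2}.

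The one step that genuinely requires this computation rather than a soft argument is the non-vanishing of $\gamma_p$: a priori the top-degree contributions of the several monomials in Newton's formula could cancel, and ruling that out is exactly the positivity of the Bernoulli-type constant. As an independent cross-check (and an alternative route to the constant), $\sigma(n,p)$ is dominated by the terms with $k$ near $m$: writing $k=m-j$ one has $\tan\frac{\pi k}{n}=\cot\frac{\pi(2j+1)}{2n}\sim\frac{2n}{\pi(2j+1)}$, so
\[
\sigma(n,p)\sim\Bigl(\tfrac{2n}{\pi}\Bigr)^{2p}\sum_{j\ge0}\frac{1}{(2j+1)^{2p}}=\Bigl(\tfrac{2n}{\pi}\Bigr)^{2p}(1-2^{-2p})\zeta(2p),
\]
and inserting $\zeta(2p)=\frac{(2\pi)^{2p}|B_{2p}|}{2\,(2p)!}$ reproduces \eqref{2}; combined with the polynomiality and the degree bound of the previous paragraph, this again pins down the leading term.
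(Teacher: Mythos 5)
Your proposal is correct. Its first half coincides with the paper's argument: you show, exactly as in the paper, that the numbers $\tan^2\frac{\pi k}{n}$, $k=1,\dots,\frac{n-1}{2}$, are the roots of the polynomial with coefficients $(-1)^i\binom{n}{2i}$, so that $e_i=\binom{n}{2i}$, and then invoke Newton's identities to get polynomiality in $n$ and the degree bound $2p$ (you are in fact a bit more careful than the paper about why one polynomial formula works for all odd $n$, via $\binom{n}{2i}=0$ for $i>\frac{n-1}{2}$). Where you genuinely diverge is the leading coefficient. The paper proves it analytically: it rewrites $\sigma(n,p)$ as $\sum_{l}\cot^{2p}\frac{(2l+1)\pi}{4m+2}$, splits off the terms with $l\le\sqrt m$, estimates the tail, and identifies the limit $\left(\frac{2}{\pi}\right)^{2p}(1-2^{-2p})\zeta(2p)$, finishing with Euler's formula for $\zeta(2p)$ (note the paper restricts this estimate to $p>1$, the case $p=1$ being immediate). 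You instead extract the top coefficient $\gamma_k$ directly from Newton's recurrence, getting the convolution identity whose generating function solves to $T(z)=\frac{z}{2}\tan z$, so the leading coefficients are literally the coefficients of $\frac{z}{2}\tan z$ and the Bernoulli expression in \eqref{2} drops out; your computation checks out ($\sum_{k\ge1}(-1)^{k-1}\frac{k}{(2k)!}z^{2k}=\frac{z}{2}\sin z$ is right). This algebraic route is uniform in $p\ge1$, avoids any asymptotic estimation and the appeal to $\zeta(2p)$, and explains the appearance of tangent numbers conceptually; the paper's route, while needing the analytic lemma, yields in passing the asymptotic behaviour of the sum itself, which it reuses later (Theorem~\ref{t3}). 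Your closing asymptotic cross-check is essentially the paper's argument in sketch form and is not needed for your proof.
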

 Hassan \cite{4} proved these results (see his Theorem 4.3 and formula 4.19), using
 a sampling theorem associated with the second-order discrete eigenvalue problem.

 Shevelev \cite{12} (see his Remark 2 and Remark 1) used some elementary arguments
 including the best-known Littlewood expression for the power sum of elementary
  polynomials in a determinant form \cite{5}.

  In this paper we give another proof of these two theorems.
   Besides, we find several other representations and identities involving
   $\sigma(n,p)$ and numerical results for them. We give applications
   of $\sigma(n,p)$ in digit theory (Section 5). In the conclusive Section 7, using
   the digit interpretation and a combinatorial idea, we found an explicit
    expression for $\sigma(n,p)$ (Theorem \ref{t7}).
 \section{Proof of Theorem 1}
 Denote $\omega=e^{\frac{2\pi i}{n}}.$ Note that
  \begin{equation}\label{3}
 \tan \frac{\pi k}{n}=i\frac{1-\omega^k}{1+\omega^k}=-i\frac{1-\omega^{-k}}{1+\omega^{-k}},\;\; \tan^2 \frac{\pi k}{n}=\frac{1-\omega^{-k}}{1+\omega^k}\frac{1-\omega^{k}}{1+\omega^{-k}}
 \end{equation}
 and, for the factors of $\tan^2 \frac{\pi k}{n},$ we have
 \begin{equation}\label{4}
 \frac{1-\omega^{-k}}{1+\omega^k}=\frac{(-\omega^{k})^{n-1}-1}{(-\omega^{k})-1}=\sum_{j=0}^{n-2}(-\omega^k)^j,\;\;
 \frac{1-\omega^{k}}{1+\omega^{-k}}=\sum_{j=0}^{n-2}(-\omega^{-k})^j.
 \end{equation}
 Since $\tan\frac{\pi k}{n}=-\tan\frac{\pi(n-k)}{n},$ then we have
 \begin{equation}\label{5}
 2\sigma(n, p)=\sum^{n-1}_{k=1}\tan^{2p}\frac{\pi k}{n}
 \end{equation}
 and, by (\ref{3})-(\ref{5}),

 $$2\sigma(n, p)=\sum^{n-1}_{k=1}(\sum_{j=0}^{n-2}(-\omega^k)^j)^p(\sum_{j=0}^{n-2}(-\omega^{-k})^j)^p=$$

 $$\sum^{n-1}_{k=1}
 (\prod_{l=0}^{p-1}\sum_{j=0}^{n-2}(-\omega^k)^j\prod_{l=0}^{p-1}\sum_{j=0}^{n-2}(-\omega^{-k})^j)=$$
  \begin{equation}\label{6}
  =\sum^{n-1}_{k=1}(\prod_{t=0}^{2p-1}\sum_{j=0}^{n-2}(-\omega^{(-1)^tk})^j).
 \end{equation}
 Furthermore, we note that
  \begin{equation}\label{7}
 (n-1)^t\equiv (-1)^t \pmod n.
 \end{equation}
 Indeed, it is evident for odd $t.$ If $t$ is even and $t=2^hs$ with odd $s,$ then
 $$(n-1)^t-(-1)^t=((n-1)^s)^{2^h}-((-1)^s)^{2^h}=$$ $$((n-1)^s-(-1)^s)((n-1)^s+(-1)^s)((n-1)^{2s}+$$ $$(-1)^{2s})\cdot...\cdot((n-1)^{2^{h-1}s}
 +(-1)^{2^{h-1}s}),$$
 and, since $(n-1)^s+1\equiv0\pmod n,$ we are done.
 Using (\ref{7}), we can write (\ref{6}) in the form (we sum from $k=0,$ adding the zero summand)
 \begin{equation}\label{8}
2\sigma(n, p)=\sum^{n-1}_{k=0}\prod^{2p-1}_{t=0}
(1-\omega^{k(n-1)^t}+\omega^{2k(n-1)^t}-...
-\omega^{(n-2)k(n-1)^t}).
 \end{equation}
 Considering $0,1,2,...,n-2$ as digits in the base $n-1,$ after the multiplication of factors of the product in (\ref{8}) we obtain summands of the form $(-1)^{s(r)}\omega^{kr},\;\; r=0,...,(n-1)^{2p}-1,$ where $s(r)$ is the digit sum of $r$ in the base $n-1.$ Thus we have
 \begin{equation}\label{9}
2\sigma(n,p)=\sum^{n-1}_{k=0}\sum^{(n-1)^{2p}-1}_{r=0}(-1)^{s(r)}\omega^{kr}=
\sum^{(n-1)^{2p}-1}_{r=0}(-1)^{s(r)}
\sum^{n-1}_{k=0}(\omega^k)^r.
 \end{equation}
 \newpage
 However,
 $$\sum^{n-1}_{k=0}(\omega^k)^r=\begin{cases}n,\;
if\; r\equiv0 \pmod n\\
0,\; otherwise.\end{cases}$$

Therefore, by (\ref{9}),
\begin{equation}\label{10}
2\sigma(n,p)=n\sum^{(n-1)^{2p}-1}_{r=0,\; n|r}(-1)^{s(r)}
 \end{equation}
 and, consequently, $2\sigma(n,p)$ is integer multiple of $n.$ It is left to show that the right hand side of (\ref{10}) is even. It is sufficient to show that the sum contains even number of summands. The number of summands is $$1+\lfloor\frac{(n-1)^{2p}}{n}\rfloor=1+\frac{(n-1)^{2p}-1}{n}=$$
 $$1+\sum_{l=0}^{2p-1}(-1)^l\binom{2p}{l}n^{2p-1-l}\equiv1+\sum_{l=0}^{2p-1}(-1)^l\binom{2p}{l} \pmod2=$$ $$1-(-1)^{2p}\binom{2p}{2p}=0.$$
 This completes proof of the theorem. \;\;\;\;\;\;\;\;\;$\square$

 \section{Proof of Theorem 2}
 We start with a construction close to one in \cite{15} (see also \cite{12}, Remark 2). As is well known,
$$\sin n\alpha=\sum_{i=0}^{\frac{n-1}{2}}(-1)^i\binom{n}{2i+1}\cos^{n-(2i+1)}\alpha\sin^{2i+1}\alpha,$$
or
$$\sin n\alpha=\tan \alpha \cos^n \alpha\sum_{i=0}^{\frac{n-1}{2}}(-1)^i\binom{n}{2i+1}\tan^{2i}\alpha.$$
Put here $\alpha=\frac{k\pi}{n},\; k=1,2,...,\frac{n-1}{2}.$ Since $\tan\alpha\neq0, \;\cos\alpha\neq0,$ then
$$0=\sum_{i=0}^{\frac{n-1}{2}}(-1)^i\binom{n}{2i+1}\tan^{2i}\alpha=$$
$$(-1)^{\frac{n-1}{2}}(\tan^{n-1}\alpha-\binom{n}{n-2}\tan^{n-3}\alpha+...-$$ $$(-1)^{\frac{n-1}{2}}\binom{n}{3}\tan^2\alpha+(-1)^{\frac{n-1}{2}}\binom{n}{1}).$$
This means that the equation
\begin{equation}\label{11}
\lambda^{\frac{n-1}{2}}-\binom{n}{2}\lambda^{\frac{n-3}{2}}+\binom{n}{4}\lambda^{\frac{n-5}{2}}-...+(-1)^{\frac{n-1}{2}}\binom{n}{n-1}=0
 \end{equation}
 has $\frac{n-1}{2}$ roots: $\lambda_k=\tan^2\frac{k\pi}{n},\;k=1,2,...,\frac{n-1}{2}.$
 Note that (\ref{11}) is the characteristic equation for the following difference equation
 \newpage
$$y(p)=\binom{n}{2}y(p-1)-\binom{n}{4}y(p-2)+...-$$

  \begin{equation}\label{12}
 (-1)^{\frac{n-1}{2}}\binom{n}{n-1}y(p-\frac{n-1}{2})
 \end{equation}

 which, consequently, has a private solution
 $$y(p)=\sum_{k=1}^{\frac{n-1}{2}}(\tan^2\frac{k\pi}{n})^p=\sigma(n,p).$$
 Now, using Newton's formulas for equation (\ref{11}),
 $$\sigma(n,1)=\binom{n}{2},$$
 $$\sigma(n,2)=\binom{n}{2}\sigma(n,1)-2\binom{n}{4},$$
 \begin{equation}\label{13}
 \sigma(n,3)=\binom{n}{2}\sigma(n,2)-\binom{n}{4}\sigma(n,1)+3\binom{n}{6}, \;etc.
 \end{equation}
 we conclude that $\sigma(n,p)$ is a polynomial in $n$ of degree $2p.$ Note that, by induction, all these polynomials are integer-valued and thus we have another independent proof of Theorem \ref{t1}. To find the leading terms of these polynomials, we make some transformations of (\ref{1}). Put $\frac{n-1}{2}=m.$ Changing in (\ref{1}) the order of summands $(l=m-k)$ and noting that $$\frac{(m-l)\pi}{2m+1}+\frac{(2l+1)\pi}{4m+2}=\frac{\pi}{2},$$ we have
 \begin{equation}\label{14}
\sigma(n,p)=\sum_{l=0}^{m-1}\cot^{2p}\frac{(2l+1)\pi}{4m+2}.
\end{equation}
Further we have
$$\sigma(n,p)=\sum_{0\leq l\leq \sqrt {m}}\cot^{2p}\frac{(2l+1)\pi}{4m+2}+$$
\begin{equation}\label{15}
\sum_{\sqrt{m}<l\leq m-1} \cot^{2p}\frac{(2l+1)\pi}{4m+2}=\Sigma_1+\Sigma_2.
\end{equation}
Let $p>1.$ Let us estimate the second sum $\Sigma_2.$ The convexity of $\sin x$ on $[0,\frac{\pi}{2}]$ gives the inequality $\sin x\geq \frac{2}{\pi}x.$ Therefore, for summands in the second sum, we have
$$\cot^{2p}\frac{(2l+1)\pi}{4m+2}<\sin^{-2p}\frac{(2l+1)\pi}{4m+2}<$$ $$(\frac{2m+1}{2l+1})^{2p}
<(\frac{2m+1}{2\sqrt{m}+1})^{2p}<m^p.$$
\newpage
This means that $\Sigma_2<m^{p+1}<m^{2p}$ and not influences on the leading term. Now note that, evidently,
$$ \frac {(2l+1)\pi}{4m+2}cot \frac {(2l+1)\pi}{4m+2}\rightarrow1$$
uniformly over $l\leq\sqrt{m}.$ Thus

$$\Sigma_1=\sum_{0\leq l\leq \sqrt {m}}(\frac{(4m+2)}{(2l+1)\pi})^{2p}+\alpha(m)=$$ $$(\frac{(4m+2)}{\pi})^{2p}\sum_{0\leq l\leq \sqrt {m}}\frac{1}{(2l+1)^{2p}}+\alpha(m), $$
where $\alpha(m)\leq \varepsilon\sqrt{m}.$ Thus the coefficient of the leading term of the polynomial $\sigma(n,p)$ is
$$\lim_{m\rightarrow\infty}\frac{\Sigma_1}{n^{2p}}=(\frac{2}{\pi})^{2p}\sum_{l=0}^{\infty}\frac{1}{(2l+1)^{2p}}=$$$$
(\frac{2}{\pi})^{2p}(\zeta(2p)-\sum_{l=1}^{\infty}\frac{1}{(2l)^{2p}})= $$$$
(\frac{2}{\pi})^{2p}(\zeta(2p)-\frac{1}{2^{2p}}\zeta(2p))=\frac{2^{p}(2^{2p}-1)}{\pi^{2p}}\zeta(2p). $$
It is left to note that, by very known formula, $\zeta(2p)=\frac{|B_{2p}|2^{2p-1}\pi^{2p}}{(2p)!},$ we find that the leading coefficient is defined by formula (\ref{2}). \;\;\;\;\;\;\;\;\;\;\;\;\;\;\;\;\;\;\;\;\;$\square$

\section{Several numerical results}
Since, by (\ref{1}), $\sigma(1,p)=0,$ then $\sigma(n,p)\equiv0\pmod {n(n-1)}.$ Put
$$\sigma^*(n,p)=2\sigma(n,p)/(n(n-1)).  $$

  By formulas (\ref{13}), the first polynomials $\{\sigma^*(n,p)\}$ are
$$\sigma^*(n,1)=1,$$
$$\sigma^*(n,2)=\frac{n^2+n}{3}-1,$$
$$\sigma^*(n,3)=\frac{2(n^2+n)(n^2-4)}{15}+1,$$
$$\sigma^*(n,4)=\frac{(n^2+n)(17n^4-95n^2+213)}{315}-1,$$
$$\sigma^*(n,5)=\frac{2(n^2+n)(n^2-4)(31n^4-100n^2+279)}{2835}+1,\;etc.$$
As well known (cf. Problem 85 in \cite{8}), the integer-valued polynomials have integer coefficients in the binomial basis $\{\binom {n}{k}\}.$ The first integer-valued polynomials $\{\sigma(n,p)\}$ represented in binomial basis have the form
$$\sigma(n,1)=\binom{n}{2},$$
\newpage
$$\sigma(n,2)=\binom{n}{2}+6\binom{n}{3}+4\binom{n}{4},$$
$$\sigma(n,3)=\binom{n}{2}+24\binom{n}{3}+96\binom{n}{4}+120\binom{n}{5}+48\binom{n}{6},$$
$$\sigma(n,4)=\binom{n}{2}+78\binom{n}{3}+836\binom{n}{4}+3080\binom{n}{5}+5040\binom{n}{6}+3808\binom{n}{7}
+1088\binom{n}{8},$$
etc.\newline
\indent Note that the recursion (\ref{12}) presupposes a fixed $n.$ In general, by (\ref{12}), we have
$$\sigma(n,p)=\binom{n}{2}\sigma(n,p-1)-\binom{n}{4}\sigma(n,p-2)+...-$$
\begin{equation}\label{16}
(-1)^{\frac{n-1}{2}}\binom{n}{n-1}\sigma(n,p-\frac{n-1}{2}),\;p\geq\frac{n-1}{2}.
 \end{equation}
  Since from (\ref{1}) $\sigma(n,0)=\frac{n-1}{2},\;n=3,5,...,$ then, calculating other initials by (\ref{13}), we have the recursions:
 $$\sigma(3,p)=3\sigma(3,p-1), \; p\geq1,\; \sigma(3,0)=1;$$
 $$\sigma(5,p)=10\sigma(5,p-1)-5\sigma(5,p-2),\; p\geq2,\; \sigma(5,0)=2,\; \sigma(5,1)=10;  $$
 $$\sigma(7,p)=21\sigma(7,p-1)-35\sigma(7,p-2)+7\sigma(7,p-3),\; p\geq3,\;$$
 $$\sigma(7,0)=3,\; \sigma(7,1)=21,\;\sigma(7,2)=371; $$
 $$\sigma(9,p)=36\sigma(9,p-1)-126\sigma(9,p-2)+84\sigma(9,p-3)-9\sigma(9,p-4),\;p\geq4,\;$$
 $$\sigma(9,0)=4,\; \sigma(9,1)=36,\;\sigma(9,2)=1044,\; \sigma(9,3)=33300; \;etc.$$

 Thus
 \begin{equation}\label{17}
 \sigma(3,p)=3^p,
 \end{equation}
 and a few terms of the other sequences $\{\sigma(n,p)\}$ are
 $$n=5)\;\;\;\; 2,10,90,850,8050,76250,722250,6841250,64801250,$$ $$613806250,5814056250,...;$$
 $$ n=7)\;\;\; 3,21,371,7077,135779,2606261,50028755,960335173,$$$$18434276035,353858266965,6792546291251,...;$$
  $$n=9)\;\;\;\;\;\;\;\;\;\;4,36,1044,33300,1070244,34420356,1107069876,$$ $$35607151476,1145248326468,36835122753252,...;$$
$$n=11)\;\;\;\;\;\;\; 5,55,2365,113311,5476405,264893255,12813875437,$$
 $$619859803695,29985188632421,1450508002869079,...\;.$$
\newpage
\section{Applications to digit theory}
For $x\in \mathbb{N}$ and odd $n\geq 3$, denote by $S_{n}(x)$ the sum

\begin{equation}\label {18}
S_{n}(x)=\sum_{0\leq r < x: \;\;r\equiv 0\pmod n}(-1)^{s_{n-1}(r)},
\end{equation}
where $s_{n-1}(r)$ is the digit sum of $r$ in base $n-1.$\newline
Note that, in particular, $S_{3}(x)$ equals the difference between the numbers of multiples of 3 with even and odd binary digit sums (or multiples of 3 from sequences A001969 and A000069 in \cite{14}) in interval $[0,x).$

Leo Moser (cf. \cite{7}, Introduction) conjectured that always

\begin{equation}\label{19}
S_{3}(x)>0.
\end{equation}

 Newman \cite{7} proved this conjecture. Moreover, he obtained
the inequalities

\begin{equation}\label{20}
\frac{1}{20} < S_{3}(x)x^{-\lambda}< 5,
\end{equation}
where
\begin{equation}\label{21}
\lambda=\frac{\ln 3}{\ln 4}=0.792481...\;.
\end{equation}
In connection with these remarkable Newman results, the qualitative result (\ref{19}) we call a weak Newman phenomenon (or Moser-Newman phenomenon), while an estimating result of
 the form (\ref{20}) we call a strong Newman phenomenon.\newline
\indent In 1983, Coquet \cite{2} studied a very complicated continuous and nowhere differentiable fractal
function $F(x)$ with period 1 for
which

\begin{equation}\label{22}
S_{3}(3x)=x^\lambda F\left(\frac{\ln x}{\ln
4}\right)+\frac{\eta(x)}{3},
\end{equation}

where

\begin{equation}\label{23}
\eta(x)=\begin{cases} 0,\;\; if \; x \;\; is\;\; even,\\
(-1)^{s_2(3x-1)}, \;\; if \;\; x \;\; is\;\; odd.\end{cases}
\end{equation}

He obtained that

\begin{equation}\label{24}
\limsup_{x\rightarrow
\infty, \;x\in\mathbb{N}}S_{3}(3x)x^{-\lambda}=\frac{55}{3}\left(\frac{3}{65}\right)^\lambda=1.601958421\ldots\;,
\end{equation}
\newpage
\begin{equation}\label{25}
\liminf_{x\rightarrow\infty, \;x\in\mathbb{N}}S_{3}(3x)x^{-\lambda}=\frac{2\sqrt{3}}{3}
=1.154700538\ldots\;.
\end{equation}

In 2007, Shevelev \cite{11} gave an elementary proof of Coquet's formulas (\ref{24})-(\ref{25}) and his sharp estimates in the form
\begin{equation}\label{26}
\frac{2\sqrt{3}}{3}x^\lambda\leq
S_{3}(3x)\leq\frac{55}{3}\left(\frac {3}{65}\right)^\lambda
x^\lambda,\;\;x\in\mathbb{N}.
\end{equation}
Besides, Shevelev showed that the sequence $\{(-1)^{s_2(n)}(S_3(n)-3S_3
(\lfloor n/4\rfloor))\}$, is periodic with period 24 taking the values
$-2,-1,0,1,2.$ This gives a simple recursion for $S_3(n).$
In 2008, Drmota and Stoll \cite{3} proved a generalized weak Newman phenomenon, showing that (\ref{19}) is valid for sum (\ref{18}) for every $n\geq3,$ at least beginning with $x\geq x_0(n).$ Our proof of Theorem \ref{t1} allows to consider a strong form of this generalization, but yet only in ``full" intervals in even base $n-1$ of the form $[0, (n-1)^{2p})$ (see also preprint
of Shevelev \cite{12}).
\begin{theorem}\label{t3}
For $x_{n,p}=(n-1)^{2p},\; p\geq1,$ we have

\begin{equation}\label{27}
S_n(x_{n,p})\sim \frac{2}{n}x_{n,p}^\lambda \;\;\sigma(n,p)\sim x_{n,p}^\lambda\;\; (p\rightarrow\infty),
\end{equation}
where
\begin{equation}\label{28}
\lambda=\lambda_n=\frac{\ln\cot(\frac{\pi}{2n})}{\ln(n-1)}.
\end{equation}
\end{theorem}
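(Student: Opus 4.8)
The plan is to combine two facts that are already in hand. First, formula (\ref{10}) from the proof of Theorem \ref{t1}, read against the definition (\ref{18}) of $S_n$, yields the \emph{exact} identity
$$S_n(x_{n,p})=\frac{2}{n}\,\sigma(n,p),$$
because the summation range $0\le r\le (n-1)^{2p}-1$ in (\ref{10}) is exactly $0\le r<x_{n,p}$, and the digit sum $s(r)$ appearing there is the base-$(n-1)$ digit sum $s_{n-1}(r)$. Hence it is enough to establish the single asymptotic $\sigma(n,p)\sim x_{n,p}^{\lambda}$ as $p\to\infty$; the statement for $S_n$ then follows by multiplying through by $2/n$.

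Second, I would invoke the cotangent representation (\ref{14}). With $m=\frac{n-1}{2}$ one has $4m+2=2n$, so
$$\sigma(n,p)=\sum_{l=0}^{m-1}\cot^{2p}\frac{(2l+1)\pi}{2n}.$$
All arguments $\frac{(2l+1)\pi}{2n}$, $0\le l\le m-1$, lie in $(0,\pi/2)$ — the largest being $\frac{(2m-1)\pi}{2n}=\frac{(n-2)\pi}{2n}<\frac{\pi}{2}$ — and $\cot$ is strictly decreasing there, so the term $l=0$, equal to $\cot^{2p}\frac{\pi}{2n}$, is strictly the largest. Factoring it out gives
$$\sigma(n,p)=\cot^{2p}\frac{\pi}{2n}\left(1+\sum_{l=1}^{m-1}\left(\frac{\cot\frac{(2l+1)\pi}{2n}}{\cot\frac{\pi}{2n}}\right)^{2p}\right),$$
where each base of a power is a fixed real number in $(0,1)$. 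Since the inner sum has only the fixed number $m-1$ of terms, it tends to $0$ as $p\to\infty$, whence $\sigma(n,p)\sim\cot^{2p}\frac{\pi}{2n}$.

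It remains to recast the dominant term in the form of (\ref{27})--(\ref{28}): by the definition of $\lambda_n$ in (\ref{28}),
$$\cot^{2p}\frac{\pi}{2n}=\exp\!\left(2p\ln\cot\frac{\pi}{2n}\right)=\exp\!\left(2p\,\lambda_n\ln(n-1)\right)=(n-1)^{2p\lambda_n}=x_{n,p}^{\lambda_n},$$
so $\sigma(n,p)\sim x_{n,p}^{\lambda_n}$, and by the exact identity above $S_n(x_{n,p})\sim\frac{2}{n}x_{n,p}^{\lambda_n}$. As a check, for $n=3$ the sum (\ref{14}) has the single term $\cot^{2p}\frac{\pi}{6}=3^{p}$, in agreement with (\ref{17}), and $x_{3,p}^{\lambda_3}=4^{p\ln 3/\ln 4}=3^{p}$ — here the asymptotic is an equality.

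There is no deep obstacle: the whole argument is a reorganization of the identities (\ref{10}) and (\ref{14}). The only points needing care are (i) verifying that the reduction of (\ref{10}) to the definition of $S_n$ is exact, with matching summation ranges and the correct base for the digit sum, and (ii) justifying that the sub-dominant cotangent terms are negligible — which is immediate precisely because their number $m-1$ is fixed while $p\to\infty$; for fixed $p$ they contribute an $O(1)$ correction that cannot be discarded. One may also remark, though it is not needed for the theorem, that $\lambda_n<1$, i.e.\ $\cot\frac{\pi}{2n}<n-1$, which follows from elementary bounds on $\cot$ (and specializes to $\lambda_3=\ln 3/\ln 4$ in (\ref{21})).
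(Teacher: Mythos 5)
Your proposal is correct and follows essentially the same route as the paper: the exact identity $S_n(x_{n,p})=\frac{2}{n}\sigma(n,p)$ read off from (\ref{10}) and (\ref{18}), followed by extraction of the dominant term $\cot^{2p}\frac{\pi}{2n}=x_{n,p}^{\lambda_n}$ of the power sum as $p\to\infty$. Your version merely spells out (via the reindexed sum (\ref{14})) why the remaining finitely many terms, each a fixed ratio in $(0,1)$ raised to the power $2p$, are negligible — a detail the paper leaves implicit in the phrase ``choosing the maximal exponent.''
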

\begin{proof} According to (\ref{10}) and (\ref{18}), we have
\begin{equation}\label{29}
S_n(x_{n,p})=\frac{2}{n}\sigma(n,p),\; p\geq1.
\end{equation}
Thus, choosing the maximal exponent in (\ref{1}) as
$p\rightarrow\infty,$ we find

$$S_n(x_{n,p})\sim\frac{2}{n}\tan^{2p} \frac {(n-1)\pi}{2n}=$$ $$\frac{2}{n}\cot^{2p}\frac{\pi}{2n}=\exp(\ln\frac{2}{n}+2p\ln\cot\frac{\pi}{2n})=$$
\begin{equation}\label{30}
\exp(\ln\frac{2}{n}+2p\lambda\ln(n-1))=\exp(\ln\frac{2}{n}+\ln x_{n,p}^\lambda)=\frac{2}{n}x_{n,p}^\lambda.
\end{equation}
\end{proof}
In particular, in the cases of $n=3,5,7,9,11$ we have $\lambda_3=\frac{\ln 3}{\ln 4}=0.79248125...,$
$\lambda_5= 0.81092244...,\; \lambda_7=0.82452046...,\;\lambda_9= 0.83455828..., \lambda_{11}=
0.84230667...$ respectively.

Show that
\begin{equation}\label{31}
1-\frac{\ln\frac \pi 2}{\ln(n-1)}\leq\lambda_n\leq1-\frac{\ln\frac \pi 2}{\ln(n-1)}+\frac{1}{(n-1)\ln(n-1)}.
\end{equation}
\newpage
Indeed, by the convexity of $\cos x$ on $[0,\frac{\pi}{2}],$  $\cos x\geq 1-\frac{2}{\pi}x,$ and, therefore, $\cos\frac{\pi}{2n}\geq1-\frac{1}{n}.$ Using also that $\tan\frac{\pi}{2n}\geq\frac{\pi}{2n}\geq\sin\frac{\pi}{2n},$ we have

$$
\frac{2}{\pi}(n-1)
\leq\cot\frac{\pi}{2n}\leq \frac{2}{\pi}n
$$
and, by (\ref{28}),
$$1-\frac{\ln\frac \pi 2}{\ln(n-1)}\leq\lambda_n\leq1-\frac{\ln\frac \pi 2}{\ln(n-1)}+\frac{\ln(1+\frac{1}{n-1})}{\ln(n-1)} $$
which yields (\ref{31}), since, for $n\geq3,$ $\ln(1+\frac{1}{n-1})<\frac{1}{n-1}.$ Finally, let us show the monotonic increasing of $\lambda_n.$  For function $f(x)=\frac{\ln\cot(\frac{\pi}{2x})}{\ln(x-1)},$ we have
\begin{equation}\label{32}
 \ln(x-1)f'(x)=\frac{\pi}{x^2\sin \frac{\pi}{x}}-\frac{f(x)}{x-1}.
\end{equation}
As in (\ref{31}), we also have
\begin{equation}\label{33}
 f(x)\leq1-\frac{\ln\frac \pi 2}{\ln(x-1)}+\frac{1}{(x-1)\ln(x-1)}.
\end{equation}
On the other hand, since $\sin\frac{\pi}{x}\leq\frac{\pi}{x},$ then
$$\frac{\pi(x-1)}{x^2\sin \frac{\pi}{x}}\geq1-\frac{1}{x}, $$
and, by (\ref{32}), in order to show that $f'(x)>0,$ it is sufficient to prove that $f(x)<1-\frac{1}{x},$ or, by (\ref{33}), to show that
$$1-\frac{\ln\frac \pi 2}{\ln(x-1)}+\frac{1}{(x-1)\ln(x-1)}<1-\frac{1}{x},$$
or
$$\frac{\ln(x-1)}{x}+\frac{1}{x-1}<\ln\frac{\pi}{2}. $$
This inequality holds for $x\geq7,$ and since $\lambda_3<\lambda_5<\lambda_7,$ then the monotonicity of $\lambda_n$ follows. Thus we have the monotonic strengthening of the strong form of Newman-like phenomenon for the base $n-1$ in
 the considered intervals.

\section{An identity}
Since (\ref{29}) was proved for
   $x_{n,p}=(n-1)^{2p},\; p\geq1,$ then, by (\ref{16}), for $S_n(x_{n,p})$
   in the case $p\geq \frac{n+1}{2},$ we have the relations
$$\sum_{k=0}^{\frac{n-1}{2}}(-1)^k\binom{n}{2k}\sigma(n, p-k)= $$
$$ \sum_{k=0}^{\frac{n-1}{2}}(-1)^k\binom{n}{2k}S_n((n-1)^{2p-2k})=0.$$
\newpage
In case $p=\frac{n-1}{2}$ the latter relation does not hold. Let us show that in this
 case we have the identity
$$ \sum_{k=0}^{\frac{n-1}{2}}(-1)^k\binom{n}{2k}S_n((n-1)^{n-2k-1})=(-1)^n,$$
or, putting $n-2k-1=2j,$ the identity
\begin{equation}\label{34}
\sum_{j=0}^{\frac{n-1}{2}}(-1)^j\binom{n}{2j+1}S_{n}((n-1)^{2j})=1.
\end{equation}
Indeed, in case $j=0,$ we, evidently, have $S_{n}(1)=1,$ while, formally,
by (\ref{29}), for $p=0,$ we obtain $"S_{n}(1)=\frac{2}{n}\sigma(n,\;0)=
\frac{2}{n}\frac{n-1}{2}=\frac{n-1}{n}",$ i.e., the error is $-\frac{1}{n},$
and the error in the corresponding sum is $n(-\frac{1}{n})=-1.$ Therefore, in the
latter formula, instead of 0, we have 1. Note that (\ref{34}) one can rewrite
 also in the form
$$\sum_{j=1}^{\frac{n-1}{2}}(-1)^{j-1}\binom{n}{2j+1}\sigma(n, j)=
\binom{n}{2}. $$
\section{Explicit combinatorial representation}
 In its turn, the representation (\ref{29}) allows to get an explicit combinatorial representation for $\sigma(n,p). $ We need three lemmas.
 \begin{lemma}\label{L4} $($\cite{10}, p. $215$ $)$ The number of compositions $C(m,n,s)$
 of $m$ with $n$ positive parts not exceeding $s$ is given by formula
 \begin{equation}\label{35}
C(m,n,s)=\sum_{j=0}^{\min(n,\lfloor\frac{m-n}{s}\rfloor)}(-1)^j\binom{n}{j}\binom{m-sj-1}{n-1}.
\end{equation}
 \end{lemma}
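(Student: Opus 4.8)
The plan is to prove Lemma \ref{L4} by a standard inclusion--exclusion argument on compositions. First I would set up the combinatorial model: a composition of $m$ with $n$ positive parts not exceeding $s$ is a solution $(a_1,\dots,a_n)$ in positive integers of $a_1+\cdots+a_n=m$ with each $a_i\leq s$. Substituting $b_i=a_i-1\geq 0$ turns this into counting nonnegative integer solutions of $b_1+\cdots+b_n=m-n$ with each $b_i\leq s-1$. Without the upper bounds, the count is $\binom{m-n+n-1}{n-1}=\binom{m-1}{n-1}$ by stars and bars.

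Next I would apply inclusion--exclusion over the ``bad'' events $A_i=\{b_i\geq s\}$. For a fixed set of $j$ indices forced to be at least $s$, shift those variables by $s$ to get nonnegative solutions of a total summing to $m-n-sj$, which number $\binom{m-n-sj+n-1}{n-1}=\binom{m-sj-1}{n-1}$ (interpreted as $0$ when $m-sj-1<n-1$). Since there are $\binom{n}{j}$ ways to choose which $j$ variables exceed the bound, inclusion--exclusion gives
\begin{equation}\label{35proof}
C(m,n,s)=\sum_{j\geq 0}(-1)^j\binom{n}{j}\binom{m-sj-1}{n-1},
\end{equation}
and the upper limit of summation can be taken as $\min(n,\lfloor\frac{m-n}{s}\rfloor)$, since $j>n$ kills the binomial $\binom{n}{j}$ and $j>\frac{m-n}{s}$ (i.e. $sj>m-n$, equivalently $m-sj-1<n-1$) kills the binomial $\binom{m-sj-1}{n-1}$.

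The only genuine subtlety, and the step to be careful about, is the convention for $\binom{m-sj-1}{n-1}$ when the top is negative or smaller than $n-1$: I would adopt the convention that $\binom{a}{b}=0$ whenever $0\leq a<b$ or $a<0$, which is exactly what makes the shift argument valid and the truncation of the sum legitimate. With this convention fixed, the argument is routine and no further obstacle arises; the formula \eqref{35} follows immediately.
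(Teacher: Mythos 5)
Your argument is correct: the shift to nonnegative variables, the stars-and-bars count, and inclusion--exclusion over the events $b_i\geq s$ give exactly formula (\ref{35}), and your handling of the truncation (noting that $j>\lfloor\frac{m-n}{s}\rfloor$ forces $m-sj-1<n-1$, so the term vanishes under the convention $\binom{a}{b}=0$ for $a<b$ or $a<0$) is the right point to be explicit about. The paper itself gives no proof of Lemma \ref{L4} --- it is quoted from Sachkov's book \cite{10} --- so there is nothing to compare against; your proof is the standard one and fills that gap cleanly.
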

 Since, evidently, $C(m,n,1)=\delta_{m,n},$ then, as a corollary, we have the identity
 \begin{equation}\label{36}
\sum_{j=0}^{\min(n,m-n)}(-1)^j\binom{n}{j}\binom{m-j-1}{n-1}=\delta_{m,n}.
\end{equation}
\begin{lemma}\label{L5} The number of compositions $C_0(m,n,s)$
 of $m$ with $n$ nonnegative parts not exceeding $s$ is given by formula \begin{equation}\label{37}
C_0(m,n,s)=\begin{cases}C(m+n,n,s+1),\;
if\;m\geq n\geq1,\;s\geq2,\\ \sum_{\nu=1}^{m} C(m,\nu,s)\binom{n}{n-\nu},
\;if\;1\leq m<n,\; s\geq2,\\1,\;if\; m=0,\; n\geq1,\; s\geq0,\\0,\;if\; m>n\geq1,\;s=1,
\\\binom {n}{m}, \;if\;1\leq m\leq n,\;s=1.
\end{cases}
\end{equation}
\newpage
 \end{lemma}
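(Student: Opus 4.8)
The plan is to count, in each regime, the integer solutions of $x_1+\cdots+x_n=m$ with $0\le x_i\le s$ --- by definition this number is $C_0(m,n,s)$ --- using as workhorse the shift $x_i\mapsto x_i+1$. For the generic case $m\ge n\ge1$, $s\ge2$, the map $(x_1,\dots,x_n)\mapsto(x_1+1,\dots,x_n+1)$ is a bijection from compositions of $m$ into $n$ nonnegative parts bounded by $s$ onto compositions of $m+n$ into $n$ positive parts bounded by $s+1$; hence $C_0(m,n,s)=C(m+n,n,s+1)$, and substituting into Lemma \ref{L4} gives the first branch. (The hypotheses $m\ge n$ and $s\ge2$ are not needed for the bijection itself; they merely delimit where this closed form is applied.)

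Next the degenerate branches. If $m=0$ the only admissible tuple is $(0,\dots,0)$, so $C_0=1$. If $s=1$ then every $x_i\in\{0,1\}$ and $\sum x_i=m$ forces exactly $m$ of the coordinates to equal $1$; there are $\binom{n}{m}$ such choices when $0\le m\le n$ and none when $m>n$, which accounts for the last two branches.

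For the remaining case $1\le m<n$, $s\ge2$, I would stratify a composition by its number $\nu$ of nonzero parts. Since each nonzero part is at least $1$ and their sum is $m<n$, one has $1\le\nu\le m$ and the constraint $\nu\le n$ is automatic. Selecting which $\nu$ coordinates are nonzero contributes $\binom{n}{\nu}=\binom{n}{n-\nu}$, while filling them with a composition of $m$ into exactly $\nu$ positive parts bounded by $s$ contributes $C(m,\nu,s)$; summing over $\nu$ gives $C_0(m,n,s)=\sum_{\nu=1}^{m}C(m,\nu,s)\binom{n}{n-\nu}$.

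The argument is elementary, and the main point requiring care is bookkeeping at the boundaries: checking that the five listed branches exhaust the relevant parameter range (under the tacit convention $s\ge1$), that the upper summation limits agree with the $\min$-conventions built into Lemma \ref{L4}, and that the $\binom{m-sj-1}{n-1}$-type terms vanish exactly when they should. I do not expect any substantive obstacle beyond this.
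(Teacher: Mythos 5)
Your proof is correct and follows essentially the same route as the paper: the shift-by-one bijection giving $C_0(m,n,s)=C(m+n,n,s+1)$, the stratification by the number $\nu$ of nonzero parts with the factor $\binom{n}{n-\nu}C(m,\nu,s)$ for $1\le m<n$, and direct inspection of the degenerate cases $m=0$ and $s=1$. No discrepancies to report.
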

 \begin{proof} \; Let firstly $s\geq2,\; m\geq n\geq1.$ If to diminish on 1 every
 part of a composition of $m+n$ with $n$ positive parts not exceeding $s+1,$ then we
  obtain a composition of $m$ with $n$ nonnegative parts not exceeding $s,$ such
  that zero parts allowed.
  Let, further, $s\geq2,\; 1\leq m<n.$ Consider $C(m,\nu,s)$
   compositions of $m$ with $\nu\leq m$ parts. To obtain $n$ parts, consider $n-\nu$
   zero parts, which we choose in $\binom{n}{n-\nu}$ ways. The summing over $1\leq\nu\leq m$
   gives the required result. Other cases are evident.
   \end{proof}

   Let now $(n-1)^{h}\leq N<(n-1)^{h+1}, \;n\geq3.$ Consider the representation
   of $N$ in the base $n-1:$
$$ N=g_h(n-1)^h+...+g_1(n-1)+g_0, $$
where $g_i=g_i(N),\; i=0,...,h,$ are digits of $N, \;\;0\leq g_i\leq n-2.$
Let
$$s^{e}(N)=\sum_{i\;is\;even} g_i,\;s^{o}(N)=\sum_{i\;is\;odd} g_i.$$
\begin{lemma}\label{L6}
 $N$ is multiple of $n$ if and only if
$s^{o}(N)\equiv s^{e}(N)\pmod n.$
\end{lemma}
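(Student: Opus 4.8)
The plan is to reduce the divisibility statement to a single congruence mod $n$ obtained by reading the base-$(n-1)$ expansion modulo $n$. The crucial elementary fact is that $n-1\equiv-1\pmod n$, hence $(n-1)^i\equiv(-1)^i\pmod n$ for every $i\geq0$; this is exactly the degenerate case $t$ even/odd already exploited via \eqref{7} in the proof of Theorem \ref{t1}, but here we only need it with the modulus $n$ itself rather than via the more elaborate factorization argument.

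First I would write out $N=\sum_{i=0}^{h}g_i(n-1)^i$ and substitute the congruence above term by term, obtaining
$$
N\equiv\sum_{i=0}^{h}g_i(-1)^i=\sum_{i\text{ even}}g_i-\sum_{i\text{ odd}}g_i=s^{e}(N)-s^{o}(N)\pmod n.
$$
Second, I would simply observe that $n\mid N$ is equivalent to $N\equiv0\pmod n$, which by the displayed congruence is equivalent to $s^{e}(N)-s^{o}(N)\equiv0\pmod n$, i.e.\ to $s^{o}(N)\equiv s^{e}(N)\pmod n$. That closes both directions at once, since every step is an ``if and only if''.

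There is essentially no serious obstacle here: the statement is the alternating-digit-sum divisibility criterion (the base-$(n-1)$ analogue of the classical divisibility-by-$11$ rule in base $10$), and the only thing to be careful about is that the digits $g_i$ range over $0,\dots,n-2$, so $s^{e}(N)$ and $s^{o}(N)$ need not individually lie in a residue system mod $n$ — but this is irrelevant, as the argument only ever compares them modulo $n$. If anything the ``hard part'' is purely cosmetic: making sure the indices of the even/odd splitting match the definitions of $s^{e}(N),s^{o}(N)$ given just before the lemma. This lemma will then feed into Theorem \ref{t7} by letting us rewrite the condition $n\mid r$ in \eqref{10} as a balance condition on the even- and odd-position digit sums of $r$, which is what the combinatorial count via Lemmas \ref{L4} and \ref{L5} is set up to handle.
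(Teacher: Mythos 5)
Your argument is correct and is exactly the paper's proof: the paper also deduces the lemma from the relation $(n-1)^i\equiv(-1)^i\pmod n$, which gives $N\equiv s^{e}(N)-s^{o}(N)\pmod n$. You have merely written out the term-by-term substitution that the paper leaves as "evident."
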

\begin{proof} The lemma follows from the evident relation $(n-1)^i\equiv(-1)^i
 \pmod{n},$\newline $i\geq0.$
\end{proof}
   Now we obtain a combinatorial explicit formula for $\sigma(n,p).$
\begin{theorem}\label{t7} For $n\geq3,\;p\geq1,$ we have
$$\sigma(n,p)=\frac{n}{2}\sum_{j=0}^{(n-2)p}((C_0(j,p,n-2))^2+$$
\begin{equation}\label{38}
2\sum_{k=1}^{\lfloor\frac{(n-2)p-j}{n}\rfloor}(-1)^kC_0(j,p,n-2)C_0(j+nk,p,n-2)),
\end{equation}
where $C_0(m,n,s)$ is defined by formula (\ref{37}).
\end{theorem}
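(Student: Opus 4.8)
The plan is to start from the digit-sum formula \eqref{10}, which says that $2\sigma(n,p)/n$ counts the multiples of $n$ in $[0,(n-1)^{2p})$, weighted by $(-1)^{s(r)}$ where $s(r)$ is the base-$(n-1)$ digit sum. A number $r$ in this range has at most $2p$ base-$(n-1)$ digits, so write $r$ as a string of exactly $2p$ digits $g_{2p-1},\dots,g_1,g_0$ with $0\le g_i\le n-2$ (padding with leading zeros). First I would split the digit positions into the $p$ even-indexed slots and the $p$ odd-indexed slots; let $j=s^o(r)$ be the sum over the odd positions, so that, by Lemma \ref{L6}, $r$ is a multiple of $n$ exactly when $s^e(r)\equiv j\pmod n$, i.e. $s^e(r)\in\{j, j\pm n, j\pm 2n,\dots\}\cap[0,(n-2)p]$. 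Moreover $(-1)^{s(r)}=(-1)^{s^e(r)+s^o(r)}=(-1)^{s^e(r)+j}$.

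Next I would organize the count by the two digit sums. The number of ways to fill the $p$ odd-indexed slots with digits in $\{0,\dots,n-2\}$ summing to $j$ is, by definition, $C_0(j,p,n-2)$, the number of compositions of $j$ into $p$ nonnegative parts each $\le n-2$; likewise the number of ways to fill the even slots so that they sum to a prescribed value $j'$ is $C_0(j',p,n-2)$. Therefore
\[
\frac{2}{n}\sigma(n,p)=\sum_{j=0}^{(n-2)p}\sum_{\substack{0\le j'\le (n-2)p\\ j'\equiv j\ (\mathrm{mod}\ n)}}(-1)^{j+j'}C_0(j,p,n-2)\,C_0(j',p,n-2).
\]
In the inner sum write $j'=j+nk$ with $k$ ranging over all integers (positive, negative, or zero) for which $0\le j+nk\le(n-2)p$; then $(-1)^{j+j'}=(-1)^{2j+nk}=(-1)^{nk}$, and since $n$ is odd this is $(-1)^k$. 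I would then separate the $k=0$ term, giving $C_0(j,p,n-2)^2$, from the terms with $k\ne 0$. Because the condition $0\le j+nk$ forces $j\ge -nk$, the negative-$k$ terms indexed by $j$ are in bijection (via $j\mapsto j+nk$) with the positive-$k$ terms, each contributing the same product $C_0(j,p,n-2)C_0(j+nk,p,n-2)$ with the same sign $(-1)^k=(-1)^{|k|}$; folding them together doubles the positive-$k$ sum, with $k$ now running from $1$ to $\lfloor((n-2)p-j)/n\rfloor$. Multiplying through by $n/2$ yields exactly \eqref{38}.

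The only genuine points to be careful about are bookkeeping ones: that padding $r$ out to exactly $2p$ digits does not over- or under-count (it does not, since leading zeros contribute nothing to the digit sum and every integer in $[0,(n-1)^{2p})$ has a unique such representation), that the even/odd splitting gives independent choices for the two halves (it does, as the slots are disjoint), and that the symmetric folding of the $\pm k$ terms is exact — the upper limit $\lfloor((n-2)p-j)/n\rfloor$ is precisely the largest $k$ with $j+nk\le(n-2)p$, while the lower constraint on the corresponding negative index is automatically $j'\ge 0$. I expect this reindexing-and-folding step to be the main obstacle, in the sense that it is the only place where one can easily make an off-by-one error; everything else is a direct translation of \eqref{10} and Lemma \ref{L6} into the language of the composition counts $C_0(\cdot,p,n-2)$ from Lemma \ref{L5}.
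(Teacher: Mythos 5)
Your proposal is correct and follows essentially the same route as the paper: both start from the digit-sum identity \eqref{10} (equivalently \eqref{29}), split the $2p$ base-$(n-1)$ digit positions into the $p$ even and $p$ odd slots, invoke Lemma \ref{L6} for the divisibility criterion, count each half by $C_0(\cdot,p,n-2)$, and pair the $k$ and $-k$ (i.e.\ the two symmetric) cases to get the factor $2$ and the sign $(-1)^k$. Your explicit reindexing over $j'=j+nk$ with $k\in\mathbb{Z}$ is just a cleaner bookkeeping of the paper's ``symmetric case'' argument, so no substantive difference.
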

\begin{proof} Consider all nonnegative integers $N's$ not exceeding
$(n-1)^{2p}-1,$ which have $2p$ digits $g_i(N)$ in base $n-1$
(the first 0's allowed). Let the sum of digits of $N$
on even $p$ positions be $j,$ while on odd $p$ positions such sum be $j+kn$ with
a positive integer $k.$ Then, by Lemma \ref{L6}, such $N's$ are multiple of $n.$
Since in the base $n-1$ the digits not exceed $n-2,$ then the number of ways
to choose such $N's,$ for $k=0,$ is $(C_0(j,p,n-2))^2.$ In the case $k\geq1,$
 we should also consider the symmetric case when on odd $p$ positions the sum of
  digits of $N$ be $j,$ while on even $p$ positions such sum be $j+kn$ with a positive
integer $k.$ This, for $k\geq1,$ gives $2C_0(j,p,n-2)C_0(j+kn,p,n-2)$ required numbers $N's.$
 Furthermore, since $n$ is odd, then, if $k$ is odd, then
 \newpage $s_{n-1}(N)$ is odd,
  while, if $k$ is even, then $s_{n-1}(N)$ is even. Thus the difference
  $S_n((n-1)^{2p})$ between $n$-multiple $N's$ with even and odd digit sums
  equals
$$S_n((n-1)^{2p})=\sum_{j}((C_0(j,p,n-2))^2+$$
 $$2\sum_{k}(-1)^{k}C_0(j,p,n-2)C_0(j+nk,p,n-2)).$$
 Now to obtain (\ref{38}), note that $0\leq j\leq(n-2)p,$ and, for $k\geq1,$ also
 $j+nk\leq (n-2)p,$ such that $1\leq k\leq\frac{(n-2)p-j}{n},$ and that, by (\ref{29}),
 $\sigma(n,p)=\frac{n}{2}S_n((n-1)^{2p}).$
\end{proof}
\begin{example}\label{e8}
Let $n=5, p=2.$ By Theorem \ref{t7}, we have
$$\sigma(5,2)=2.5\sum_{j=0}^{6}((C_0(j,2,3))^2+$$
\begin{equation}\label{39}
2\sum_{k=1}^{\lfloor\frac{6-j}{3}\rfloor}(-1)^kC_0(j,2,3)C_0(j+5k,2,3)).
\end{equation}
We have
$$C_0(0,2,3)=1,C_0(1,2,3)=2,C_0(2,2,3)=3,$$
 $$C_0(3,2,3)=4,C_0(4,2,3)=3,C_0(5,2,3)=2,C_0(6,2,3)=1.$$
 Thus
 $$\sum_{j=0}^{6}((C_0(j,2,3))^2=44.$$
 In the cases $j=0, k=1$ and $j=1, k=1$ we have
 $$C_0(0,2,3)C_0(5,2,3)=2,\; C_0(1,2,3)C_0(6,2,3)=2.$$
 Thus
$$ 2\sum_{j=0}^6\sum_{k=1}^{\lfloor\frac{6-j}{3}\rfloor}(-1)^k
C_0(j,2,3)C_0(j+5k,2,3))=-8$$
and, by (\ref{39}), we have
$$\sigma(5,2)=2.5(44-8)=90.$$
On the other hand, by (\ref{1}), we directly have
$$\sigma(5, 2)=\sum^{2}_{k=1}\tan^{4}\frac{\pi k}{5}=0.278640...+89.721359...=89.999999...$$
\end{example}
\begin{example}\label{e9}
In case $n=3,$ by Theorem \ref{t7} and formulas (\ref{17}), (\ref{37}), we have
$$3^p=\frac{3}{2}\sum_{j=0}^{p}((C_0(j,p,1))^2+$$
\newpage
$$2\sum_{k=1}^{\lfloor\frac{p-j}{3}\rfloor}(-1)^kC_0(j,p,1)C_0(j+3k,p,1))=$$
$$\frac{3}{2}\sum_{j=0}^{p}(\binom{p}{j}^2+
2\sum_{k=1}^{\lfloor\frac{p-j}{3}\rfloor}(-1)^k\binom{p}{j}\binom{p}{3k+j}.$$
Thus, using well known formula $\sum_{j=0}^{p}(\binom{p}{j}^2=\binom{2p}{p},$
we find the identity
$$\sum_{j=0}^{p}\sum_{k=1}^{\lfloor\frac{p-j}{3}\rfloor}(-1)^k\binom{p}{j}
\binom{p}{3k+j}=3^{p-1}-\frac{1}{2}\binom{2p}{p},$$
or, changing the order of summing,
$$\sum_{k=1}^{\lfloor\frac{p}3\rfloor}(-1)^k\sum_{j=0}^{p-3k}\binom{p}{j}
\binom{p}{3k+j}=3^{p-1}-\frac{1}{2}\binom{2p}{p}.$$
Since (cf.\cite{9},p.8)

\begin{equation}\label{40}
\sum_{j=0}^{p-3k}\binom{p}{j}\binom{p}{3k+j}=\binom{2p}{p+3k},
\end{equation}
then we obtain an identity
\begin{equation}\label{41}
\sum_{k=1}^{\lfloor\frac{p}3\rfloor}(-1)^{k-1}\binom{2p}{p+3k}=
\frac{1}{2}\binom{2p}{p}-3^{p-1},\; p\geq1.
\end{equation}
\end{example}
Note that firstly (\ref{41}) was proved in a quite another way by Shevelev
 \cite{13} (2007) and again proved by Merca \cite{6} (2012).

\bfseries Acknowledgment \mdseries The authors are grateful to Jean-Paul Allouche for the indicating the paper \cite{4}.

\end{document}